
\documentclass[12pt,reqno]{amsart}

\usepackage{latexsym, amsmath, amscd, amssymb, amsthm}
\usepackage{bm}
\usepackage{mathrsfs}   
\usepackage{xspace}
\usepackage{enumerate}
\usepackage{verbatim}
\usepackage[english]{babel}
\usepackage{epsfig}
\newcommand{\R}{{\mathbb R}}

\begin{document}
 
\newtheorem{lemma}{Lemma}[section]
\newtheorem{theorem}[lemma]{Theorem}
\newtheorem{corollary}[lemma]{Corollary}
\newtheorem{proposition}[lemma]{Proposition}
\theoremstyle{definition}
\newtheorem{definition}[lemma]{Definition}
\newtheorem{conjecture}[lemma]{Conjecture}
\newtheorem{question}[lemma]{Question}
\newtheorem{problem}[lemma]{Problem}
\newtheorem{claim}[lemma]{Claim}
\newtheorem{example}[lemma]{Example}
\newtheorem{remark}[lemma]{Remark}
\newtheorem{assumption}[lemma]{Assumption}
\newtheorem*{acknowledgements}{Acknowledgments}
\theoremstyle{remark}

\title[Spectral bounds ]{Spectral bounds for non-uniform hypergraphs using weighted clique expansion }  
\author[A. Guha]{Ashwin Guha}
\address{Department of Computer Science and Automation \\ Indian Institute of Science \\ Bangalore 560012, India.}
\email{guha.ashwin@gmail.com }

\author[A. Dukkipati]{Ambedkar Dukkipati}
\address{Department of Computer Science and Automation \\ Indian Institute of Science \\ Bangalore 560012, India.}
\email{ambedkar@iisc.ac.in}

\begin{abstract}

Hypergraphs are an invaluable tool to understand many hidden patterns in large data sets. Among many ways to represent hypergraph, one useful representation is that of weighted clique expansion. In this paper, we consider this representation for non-uniform hypergraphs. We generalize the spectral results for uniform hypergraphs to non-uniform hypergraphs and show that they extend in a natural way. We provide a bound on the largest eigenvalue with respect to the average degree of neighbours of a vertex in a graph. We also prove an inequality on the boundary of a vertex set in terms of the largest and second smallest eigenvalue and use it to obtain bounds on some connectivity parameters of the hypergraph.

\end{abstract}

\maketitle
 
\section{Introduction}
\label{sec_intro}

Hypergraphs are a generalization of combinatorial graphs, where an edge may span three or more vertices. Hypergraphs can be used to model complex interactions among various entities. They are applied in a wide range of areas from protein interaction \cite{ramadan2004hypergraph}, social network analysis \cite{qian2009hypergraph} to image processing \cite{ducournau2012reductive}. Hypergraphs are now widely used in machine learning. One of the key problems is that of hypergraph partitioning or clustering. Several algorithms have been proposed for clustering \cite{bulo2009game,leordeanu2012efficient,papa2007hypergraph}. Among these, spectral methods are proving to be popular \cite{zhou2007learning, agarwal2006higher,ghoshdastidar2017consistency}. Typically, in these algorithms, we define a Laplacian matrix. We then use the use the top $k$ eigenvalues and corresponding eigenvectors to construct $k$-clusters.  

It is possible to represent a hypergraph in many ways. One common method is to use a tensor to describe a hypergraph. Lim \cite{lim2005singular} and Qi \cite{qi2005eigenvalues} independently defined eigenvalues for a tensor. Thus, one may define an adjacency tensor and a Laplacian tensor for a $k$-graph and use eigenvalues of these tensors to obtains spectral bounds. It is also possible to use tensors for non-uniform hypergraphs using a modified definition of adjacency tensor  \cite{cooper2012spectra,li2017analytic, qi2014h+, yang2010further, banerjee2017spectra}. 

There are also many matrix representations of hypergraphs. The simplest way is by using clique expansion, where one converts the hypergraph into a 2-graph by considering each hyperedge of size $k$ as a $k$-clique in a 2-graph. This representation, however, does not capture the full information of the hypergraph. Two different hypergraphs may give rise to the same 2-graph. 

\begin{example}
Consider two non-isomorphic 3-graphs $G_1= \{123, 124,234 \}$ and $G_2 =\{123,124,134,234 \}$. They both give rise to the same 
2-graph.
 \begin{center}
  \includegraphics[height=3cm]{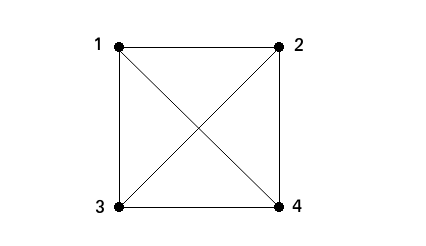}
 \end{center}
\end{example}

A modified version of clique expansion was introduced by Rodriguez in \cite{rodriguez2003laplacian}. A weighted 2-graph is obtained from the hypergraph in this expansion. 

Formally, a hypergraph $H(V,E)$ is a collection of vertices $V$ and edges $E$, which are subsets of $V$, \emph{i.e.} $E \subset 2^V$. If all edges in $E$ are of same size $k$, then $H$ is called a \emph{$k$-uniform hypergraph} or a \emph{$k$-graph}. The \emph{degree} of a vertex is defined as the number of edges containing the vertex. Let $n$ be the number of vertices in $H$. We define an $n \times n$ \emph{adjacency matrix} $A$ such that for all pairs of vertices $i$ and $j$, $a_{ij}$ is the number of edges containing $i$ and $j$.  

\[A_{ij}=
\begin{cases}
 0 & \text{ if } i=j, \\
 |\{e_{\ell}\}| & \text{ if }  i,j \in e_{\ell},\\
 0 &\text{ otherwise.}
\end{cases} \]

For a 2-graph, this definition coincides with the familiar definition of adjacency matrix.  

We define \emph{Laplacian degree} of a vertex as $\delta_i = \displaystyle\sum\limits_{j=1}^{n} a_{ij}$ \cite{rodriguez2009laplacian}. For a $k$-graph, we have $\delta_i=(k-1)d_i$. For a general hypergraph, $\delta_i = \displaystyle\sum\limits_{e \supset i} |e|-d_i $, which gives us the inequality, $(k_{\min}-1)d_i \leq \delta_i \leq (k_{\max}-1)d_i. $
We may observe that the vertex with maximum degree need not be the vertex with maximum $\delta$. The Laplacian of a hypergraph is defined as 
\[L_{ij}=
\begin{cases}
 \delta_{i} & \text{ if } i=j, \\
 -a_{ij}  &\text{ otherwise.}
\end{cases} \]

Let $\lambda_1 \leq \lambda_2 \leq \ldots \leq \lambda_n$ be the eigenvalues of $L$. Since $L$ is symmetric, the eigenvalues are real and non-negative. In particular, $\lambda_1=0 $ and $\lambda_2 > 0 $ if and only if the hypergraph is connected.

In \cite{rodriguez2009laplacian}, bounds are established for the second smallest eigenvalue of the Laplacian of a $k$-graph in terms of the size of subsets. These results in turn help us bound many connectivity parameters of the $k$-graph. 

In this paper, we wish to extend these results for non-uniform hypergraphs. The definitions mentioned above were initially given for $k$-graphs, but they hold for general hypergraphs without any modification. Hence, we can get similar bounds for non-uniform graphs as well. We show that the results extend in an intuitive way. 

This paper is organized as follows. In Section \ref{sec_prelimresults}, we give the spectrum of a complete $k$-graph, a complete $k$-partite graph and a star graph in this representation. We then proceed to give simple bounds on $\lambda_n $. We also extend a bound on $\lambda_n$ in terms of average degree of neighbours from 2-graphs to general hypergraphs. Section \ref{sec_connresults} contains the bounds of various connectivity parameters in terms of $\lambda_2 $ and $\lambda_n $. Section \ref{sec_summary} contains the summary and concluding remarks. 

\section{Preliminary Results}
\label{sec_prelimresults} 

We now consider the spectrum of some simple $k$-graphs. These graphs are useful in the sense that they provide an upper bound for the spectrum. In many spectral problems, these are the cases which represent the extremal cases, hence worthwhile to study. 

\begin{proposition}
 The Laplacian spectrum of complete $k$-graph with $n$ vertices is $0$ with multiplicity one and $n \binom{n-2}{k-2}$ with multiplicity  $(n-1)$.
\end{proposition}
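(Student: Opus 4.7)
The plan is to write the Laplacian $L$ of the complete $k$-graph explicitly and then recognize it as a simple scalar multiple of $nI - J$, where $J$ is the $n \times n$ all-ones matrix, whose spectrum is well known.

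First I would observe that in the complete $k$-graph every $k$-subset of $V$ is an edge, so for any fixed pair $i \neq j$ the number of hyperedges containing both $i$ and $j$ equals the number of ways to choose the remaining $k-2$ vertices from the other $n-2$, namely $\binom{n-2}{k-2}$. Hence $A_{ij} = \binom{n-2}{k-2}$ for all $i \neq j$ and $A_{ii} = 0$. Summing over $j \neq i$ gives the Laplacian degree $\delta_i = (n-1)\binom{n-2}{k-2}$, which is the same for every vertex.

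Next I would assemble $L$: its diagonal entries are $(n-1)\binom{n-2}{k-2}$ and its off-diagonal entries are $-\binom{n-2}{k-2}$. Factoring out $\binom{n-2}{k-2}$ gives
\[
L \;=\; \binom{n-2}{k-2}\,(nI - J),
\]
where $J$ is the $n \times n$ all-ones matrix. Since $J$ has eigenvalues $n$ (with eigenvector $\mathbf{1}$, multiplicity one) and $0$ (multiplicity $n-1$, with eigenspace equal to $\mathbf{1}^\perp$), the matrix $nI - J$ has eigenvalues $0$ (multiplicity one) and $n$ (multiplicity $n-1$). Multiplying by $\binom{n-2}{k-2}$ yields the claimed spectrum: $0$ with multiplicity one and $n\binom{n-2}{k-2}$ with multiplicity $n-1$.

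There is no real obstacle here; the only thing to be careful about is the combinatorial identification of $A_{ij}$ in the complete $k$-graph, after which the argument reduces to the elementary spectral decomposition of $J$. This also confirms the general fact that $\lambda_1 = 0$ with eigenvector $\mathbf{1}$, consistent with the Laplacian framework set up earlier.
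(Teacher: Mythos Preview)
Your proof is correct and follows essentially the same approach as the paper: both compute the Laplacian entries to be $(n-1)\binom{n-2}{k-2}$ on the diagonal and $-\binom{n-2}{k-2}$ off the diagonal, and then read off the spectrum. The only cosmetic difference is that the paper exhibits the eigenvectors $(1,\ldots,1)^T$ and $(1,-1,0,\ldots,0)^T,\ldots,(1,0,\ldots,0,-1)^T$ directly, whereas you factor $L=\binom{n-2}{k-2}(nI-J)$ and invoke the spectrum of $J$; these are equivalent observations.
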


\begin{proof}
The Laplacian matrix is as follows. 
\[L_{ij}=
\begin{cases}
 (n-1) \binom{n-2}{k-2} & \text{ if } i=j, \\
  -\binom{n-2}{k-2} &\text{ otherwise.}
\end{cases} \]
The vector $(1,\ldots,1)^T$ corresponds to eigenvalue $0$ and $(1,-1,0,\ldots,0)^T$ through $(1,0,\ldots,0,-1)^T$ correspond to eigenvalue $n \binom{n-2}{k-2}$.
\end{proof}

It is possible to get a similar result for a $k$-partite graph, \emph{i.e.} a graph where the vertex set is divided into $k$-partitions and each edge consists of exactly one vertex from each partition. 

\begin{proposition}
For a complete $k$-partite graph with partitions of size $n_1, \ldots, n_k $, the Laplacian spectrum consists of  
$0$ with multiplicity one, $(k-1)\left( \displaystyle\prod\limits_{j=1}^{k} n_j\right) /n_i $  with multiplicity $(n_i -1) $ for all $i=1, \ldots, k$. The remaining $(k-1) $ eigenvalues are roots of the polynomial
$$ X^{k-1} - A_{k-2} X^{k-2} + \ldots + (-1)^{k-1}A_0, $$ 
where the coefficient $A_i$, the sum of products of roots taken $(k-1-i)$ at a time, is given by
$$A_i = (i+1)k^{k-2-i} \left(\prod_{j=1}^{k}n_j\right)^{k-2-i} \left( \sum_{1 \leq j_1 \ldots j_{i+1} \leq k} n_{j_1} \ldots n_{j_{i+1}} \right) .$$
\end{proposition}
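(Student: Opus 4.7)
The plan is to decompose $\mathbb{R}^V$ into two orthogonal, $L$-invariant subspaces: the partition-supported space, which is the direct sum over $i$ of vectors supported on $V_i$ that sum to zero on $V_i$ (dimension $n-k$), and the partition-constant space $W$ of vectors constant on each partition (dimension $k$). The partition-supported space yields the eigenvalues $(k-1)P/n_i$ with multiplicity $n_i-1$, and the restriction of $L$ to $W$ is a $k\times k$ matrix whose characteristic polynomial factors as $\lambda$ times the announced polynomial.

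\textbf{Step 1 (partition-supported eigenvectors).} Fix $i$ and let $x$ be supported on $V_i$ with $\sum_{v\in V_i}x_v=0$. Since two vertices in the same partition share no edge, $(Ax)_v=0$ for $v\in V_i$. For $w\in V_j$ with $j\neq i$, the coefficient $a_{wv}=\prod_{\ell\neq i,j}n_\ell$ is independent of $v\in V_i$, so $(Ax)_w$ equals this constant times $\sum_v x_v=0$. Hence $Ax=0$ and $Lx=\delta_i x$ with $\delta_i=(k-1)P/n_i$, where $P=\prod_j n_j$. This yields $n_i-1$ independent eigenvectors per partition, accounting for $\sum_i(n_i-1)=n-k$ eigenvalues.

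\textbf{Step 2 (the reduced problem on $W$).} Writing $y\in W$ as $y_v=c_i$ for $v\in V_i$ and setting $S=\sum_j c_j$, a short computation using $\delta_v=(k-1)P/n_i$ and $\sum_{w\in V_j}a_{vw}=P/n_i$ gives $(Ly)_v=(P/n_i)(kc_i-S)$. So the eigenvalue equation on $W$ reduces to the scalar system $(kP-\lambda n_i)c_i=PS$ for $i=1,\ldots,k$. The all-ones vector ($c_i\equiv 1$, $S=k$) satisfies this with $\lambda=0$; the remaining $k-1$ eigenvalues of $L$ are the nonzero roots of this system.

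\textbf{Step 3 (extracting the characteristic polynomial).} For a nonzero eigenvalue $S\neq 0$, so $c_i=PS/(kP-\lambda n_i)$; summing over $i$ yields
\[ \prod_{i=1}^k(kP-\lambda n_i) \;=\; P\sum_{i=1}^k \prod_{j\neq i}(kP-\lambda n_j). \]
Substituting $z=kP/\lambda$ and $g(z)=\prod_i(z-n_i)$ collapses this elegantly to $zg'(z)=kg(z)$, that is,
\[ \sum_{m=1}^k(-1)^{m+1} m\, e_m(n_1,\ldots,n_k)\, z^{k-m}\;=\;0, \]
where $e_m$ denotes the $m$th elementary symmetric polynomial. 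Converting back to $\lambda$ multiplies through by appropriate powers of $\lambda/(kP)$; normalizing the leading coefficient produces a monic polynomial of degree $k-1$. Reading off the coefficient of $\lambda^i$ gives $(-1)^{k-1-i}(i+1)k^{k-2-i}P^{k-2-i}e_{i+1}(n)$, which is precisely $(-1)^{k-1-i}A_i$ as claimed, since $e_{i+1}(n_1,\ldots,n_k)$ is exactly the sum of products of the $n_j$ taken $i+1$ at a time. A total count $(n-k)+1+(k-1)=n$ confirms nothing is missed. The main obstacle is the careful bookkeeping of signs and powers of $k$ and $P$ in this last conversion; the geometric decomposition in Steps 1--2 is essentially forced by the partition structure.
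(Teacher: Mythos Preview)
The paper states this proposition without proof, following it only with the worked $k=5$ example; your argument supplies what the paper omits, and it is correct.

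One small point worth tightening: in Step~3 you pass from the system $(kP-\lambda n_i)c_i=PS$ to $c_i=PS/(kP-\lambda n_i)$ and then sum, which tacitly assumes $S\neq 0$ and $kP-\lambda n_i\neq 0$ for every $i$. This can fail when two partitions have equal size (then $\lambda=kP/n_i$ is an eigenvalue on $W$ with $S=0$). The conclusion is still right, because the polynomial you obtain,
\[
\prod_{p=1}^{k}(kP-\lambda n_p)\;-\;P\sum_{p=1}^{k}\prod_{q\neq p}(kP-\lambda n_q),
\]
is exactly $\det\!\bigl(\operatorname{diag}(kP-\lambda n_p)-PJ\bigr)$ by the matrix determinant lemma applied to the rank-one perturbation $PJ=P\,\mathbf{1}\mathbf{1}^{T}$, hence it is the characteristic polynomial of $L|_{W}$ (up to the harmless factor $(-1)^k\prod_i n_i^{-1}$) regardless of whether $S$ vanishes. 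A one-line remark to this effect would make the derivation airtight in all cases, not just the generic one.
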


For example, consider a 5-partition where the sizes of the partitions are $a,b,c,d,e$. Then the eigenvalues are 
$0$ with multiplicity one, $4abcd$ with multiplicity $(e-1)$, $4abce$ with multiplicity $(d-1)$, $4abde$ with multiplicity $(c-1)$,
$4acde$ with multiplicity $(b-1)$ and $4bcde$ with multiplicity $(a-1)$. The remaining four eigenvalues are the roots of the polynomial 

\begin{align*} 
X^4 &- 4\left(\sum abcd \right)X^3 + 3 \cdot 5\left(\prod a\right)\left(\sum abc\right)X^2 \\
&- 2 \cdot 5^2\left(\prod a\right)^2 \left(\sum ab\right)X + 1 \cdot 5^3 \left(\prod a\right)^3\left(\sum a\right) .
\end{align*}

For a star $k$-graph with $r$ spokes, the spectrum is obtained easily. The number of vertices in such a graph is given by $n=(k-1) r+1 $. 
\begin{proposition}
 The Laplacian spectrum of a star $k$-graph with $r$ spokes consists of $0$ and $n$ with multiplicity one each, $1$ with multiplicity $(r-1)$, and $k$ with multiplicity $(k-2)r$.
\end{proposition}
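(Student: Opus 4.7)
The plan is to exhibit an explicit orthogonal eigenbasis of $L$ consisting of four families of vectors, one per eigenvalue in the claimed spectrum, and verify that the dimensions add up to $n=(k-1)r+1$. Let $0$ denote the central vertex and label the remaining vertices by $v_{i,j}$ with $i\in\{1,\dots,r\}$ and $j\in\{1,\dots,k-1\}$, so that the $i$-th hyperedge is $\{0,v_{i,1},\dots,v_{i,k-1}\}$. A direct count gives $\delta_0=r(k-1)=n-1$ and $\delta_{v_{i,j}}=k-1$, so the Laplacian has $L_{00}=n-1$, $L_{v_{i,j}v_{i,j}}=k-1$, $L_{0,v_{i,j}}=-1$, and $L_{v_{i,j},v_{i',j'}}=-1$ precisely when $i=i'$ and $j\neq j'$.

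First, the constant vector $\mathbf{1}$ obviously lies in $\ker L$ and accounts for the eigenvalue $0$. Next I would look for eigenvectors supported on a single hyperedge and vanishing at the center. Fix $i$ and take $x$ with $x_0=0$, $x_{v_{i',j}}=0$ for $i'\neq i$, and $(x_{v_{i,1}},\dots,x_{v_{i,k-1}})$ summing to zero. Then $(Lx)_0=-\sum_j x_{v_{i,j}}=0$; at a spoke inside the edge, $(Lx)_{v_{i,j}}=(k-1)x_{v_{i,j}}-\sum_{j'\neq j}x_{v_{i,j'}}=k\,x_{v_{i,j}}$ because $\sum_{j'\neq j}x_{v_{i,j'}}=-x_{v_{i,j}}$; and at spokes of other edges everything vanishes. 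Each edge thus contributes a $(k-2)$-dimensional eigenspace for eigenvalue $k$, giving $(k-2)r$ independent vectors in total.

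For the eigenvalue $1$, I would take vectors $x$ constant on each edge's spokes: $x_0=0$ and $x_{v_{i,j}}=c_i$ with $\sum_i c_i=0$. One checks $(Lx)_0=-(k-1)\sum_i c_i=0$ and $(Lx)_{v_{i,j}}=(k-1)c_i-(k-2)c_i=c_i$, yielding eigenvalue $1$ with an $(r-1)$-dimensional eigenspace. Finally, the remaining one-dimensional subspace orthogonal to the ones already exhausted is spanned by the ``radial'' vector $x_0=a$, $x_{v_{i,j}}=b$, where symmetry forces $L$ to act as the $2\times 2$ matrix $\begin{pmatrix}n-1 & -(n-1)\\ -1 & 1\end{pmatrix}$ on the $(a,b)$ pair. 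Its eigenvalues are $0$ (already recorded) and $n$, the latter realized by $a=1-n$, $b=1$. Adding multiplicities, $1+(k-2)r+(r-1)+1=(k-1)r+1=n$, so we have found the whole spectrum.

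No step is truly an obstacle; the one place to be careful is in checking that the three constructed families are genuinely linearly independent from each other, which is transparent once one observes that they are supported respectively on different coordinate subspaces and that the second family is chosen orthogonal to the constants on each edge.
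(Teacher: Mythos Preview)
Your proof is correct and follows essentially the same approach as the paper: the paper's ``proof'' consists solely of listing the very eigenvectors you construct---the all-ones vector, the vector $(n-1\,|\,-1,\dots,-1)^T$, the between-edge contrasts $(0\,|\,1,\dots,1\,|\,-1,\dots,-1\,|\,0,\dots,0)^T$, and the within-edge contrasts $(0\,|\,1,-1,0,\dots,0\,|\,0,\dots,0)^T$---without carrying out the verifications you supply. Your version is in fact more complete, since you actually check the eigenvalue equations and the dimension count.
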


The eigenvectors are given below. 

\begin{align*}
0 &:  (1, \ldots,1)^T, \\ 
n &: (n-1| -1,\ldots,-1)^T,\\
1 &:(0| 1, \ldots,1 | -1, \ldots,-1 |0, \ldots,0)^T, \ldots, (0| 1, \ldots,1 | 0, \ldots, 0 |-1, \ldots,-1)^T,\\
k &:(0| 1,-1,0, \ldots,0| 0,\ldots,0)^T, \ldots, (0| 1,0, \ldots,0,-1| 0,\ldots,0)^T,\ldots, \\
  &:(0| 0,\ldots,0 |1,-1,0, \ldots,0)^T, \ldots,(0| 0,\ldots,0 | 1,0, \ldots,0,-1)^T. 
\end{align*}

It is possible to get some simple bounds on the largest eigenvalue of the Laplacian. These are generalizations of the bounds for 2-graphs. The results mentioned here are valid for both uniform as well as non-uniform hypergraphs. The proofs are exactly the same as for weighted graphs. 


\begin{proposition}
$$\lambda_n \leq 2 \max_i \delta_i.$$ 
\end{proposition}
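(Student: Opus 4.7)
The plan is to use the Rayleigh quotient characterization of $\lambda_n$, together with the standard quadratic form identity for Laplacian matrices. Since $L$ is real symmetric, we have
\[
\lambda_n \;=\; \max_{x\neq 0} \frac{x^T L x}{x^T x},
\]
so it suffices to show $x^T L x \le 2(\max_i \delta_i)\,\|x\|^2$ for every $x \in \R^n$.

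The first step is to rewrite $x^T L x$ in its customary ``edge-difference'' form. Using $L_{ii}=\delta_i=\sum_{j\ne i} a_{ij}$ and $L_{ij}=-a_{ij}$ together with the symmetry $a_{ij}=a_{ji}$, a direct expansion yields the identity
\[
x^T L x \;=\; \sum_{i<j} a_{ij}\,(x_i-x_j)^2.
\]
This is exactly the standard Laplacian quadratic form, and I would simply note it without grinding through the routine bookkeeping; it shows in particular that $L$ is positive semidefinite (consistent with what was already asserted earlier in the paper).

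The second step is the elementary bound $(x_i-x_j)^2 \le 2(x_i^2+x_j^2)$. Substituting into the identity above and regrouping the sum by vertex,
\[
x^T L x \;\le\; 2\sum_{i<j} a_{ij}(x_i^2+x_j^2) \;=\; 2\sum_i x_i^2 \sum_{j\ne i} a_{ij} \;=\; 2\sum_i \delta_i\, x_i^2 \;\le\; 2\bigl(\max_i \delta_i\bigr)\sum_i x_i^2.
\]
Dividing by $\|x\|^2$ and taking the supremum over $x\ne 0$ delivers the claim. (Equivalently, one could invoke Gershgorin's circle theorem: every eigenvalue of $L$ lies in some disk $|\lambda-\delta_i|\le \sum_{j\ne i}|L_{ij}|=\delta_i$, so $\lambda\le 2\delta_i\le 2\max_i\delta_i$.)

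There is no real obstacle here; the only thing to be slightly careful about is the regrouping step, where one must correctly account for each unordered pair $\{i,j\}$ contributing to both $\delta_i$ and $\delta_j$, which is precisely what turns the factor of $2$ from the inequality $(x_i-x_j)^2\le 2(x_i^2+x_j^2)$ into the factor of $2$ that appears in the final bound. The argument is purely algebraic and uses nothing specific to the uniform or non-uniform nature of the hypergraph, only that $L$ arises from a symmetric nonnegative weight matrix $A$ with $\delta_i=\sum_j a_{ij}$.
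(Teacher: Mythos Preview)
Your argument is correct. It differs from the paper's proof, which instead picks an eigenvector $x$ for $\lambda_n$, looks at the coordinate $i$ where $|x_i|$ is maximal, and reads off $\lambda_n x_i = \delta_i x_i - \sum_{j\sim i} a_{ij} x_j \le 2\delta_i x_i$ by bounding each $|x_j|$ by $|x_i|$; this is essentially the Gershgorin argument you mention parenthetically, carried out by hand on the eigenvector rather than stated abstractly. Your main route via the Rayleigh quotient and the identity $x^T L x = \sum_{i<j} a_{ij}(x_i-x_j)^2$ is a genuinely different proof: it has the advantage of making positive semidefiniteness transparent and of yielding the inequality uniformly for all $x$, not just eigenvectors, while the paper's approach is slightly shorter and avoids any regrouping bookkeeping. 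Both use only that $A$ is symmetric with nonnegative entries and $\delta_i = \sum_j a_{ij}$, so neither depends on uniformity of the hypergraph.
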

\begin{proof}
Let $x$ be the eigenvector of $\lambda_n$. Let $x_i$ be the largest component, \emph{i.e.}. $|x_i| \geq |x_j|$ for all $j=1, \ldots, n$. Assume $0< x_i \leq 1$.
\begin{align*}
\lambda x_i = (Lx)_i &= \delta_ix_i - \sum_{i \sim j} a_{ij}x_j \\
&\leq \delta_ix_i + \sum_{i \sim j} |a_{ij} x_j| \\
&\leq \delta_ix_i + \sum_{i \sim j} |a_{ij} x_i| \\
&\leq 2 \delta_i x_i \leq 2 \max_i \delta_i x_i.
\end{align*}
\end{proof}

\begin{proposition}
\label{degi_plus_degj}
$$\lambda_n \leq  \max_{i \sim j} (\delta_i + \delta_j).$$
\end{proposition}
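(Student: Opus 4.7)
The plan is to refine the eigenvector argument of the previous proposition: rather than using only the coordinate of maximum modulus of the extremal eigenvector, I would use also a second coordinate, namely a neighbor of that maximum with largest modulus, and combine the two eigenvalue equations.

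Concretely, let $x$ be a unit eigenvector for $\lambda_n$, pick $k$ with $|x_k| = \max_i |x_i|$, and assume without loss of generality that $x_k > 0$. If $\delta_k = 0$ the eigenvalue equation at $k$ forces $\lambda_n = 0$ and the bound is trivial, so assume $\delta_k > 0$ and let $l \sim k$ be an index attaining $|x_l| = \max_{j \sim k} |x_j|$. From $(Lx)_k = \lambda_n x_k$, together with $a_{kj} \geq 0$, the triangle inequality yields
\[
|\lambda_n - \delta_k|\, x_k \;=\; \Bigl|\sum_{j\sim k} a_{kj}\, x_j\Bigr| \;\leq\; |x_l| \sum_{j\sim k} a_{kj} \;=\; \delta_k |x_l|.
\]
The analogous estimate at $l$, using $|x_j| \leq x_k$ for every $j$, gives
\[
|\lambda_n - \delta_l|\, |x_l| \;\leq\; \delta_l\, x_k.
\]

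If $|x_l| > 0$, I would multiply the two inequalities and cancel $x_k |x_l|$ to obtain $|\lambda_n - \delta_k|\,|\lambda_n - \delta_l| \leq \delta_k \delta_l$. A short case check then forces $\lambda_n \leq \delta_k + \delta_l$: when $\lambda_n \geq \max(\delta_k,\delta_l)$ the inequality expands to $\lambda_n^2 \leq (\delta_k + \delta_l)\lambda_n$, while in the opposite case $\lambda_n < \max(\delta_k,\delta_l) \leq \delta_k + \delta_l$ is automatic. If $|x_l| = 0$, the first displayed inequality collapses to $\lambda_n = \delta_k \leq \delta_k + \delta_l$. Since $l$ was selected with $l \sim k$, in every case $\lambda_n \leq \max_{i \sim j}(\delta_i + \delta_j)$.

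The only mildly delicate point is the case analysis on the signs of $\lambda_n - \delta_k$ and $\lambda_n - \delta_l$, together with the degenerate situations $|x_l| = 0$ or $\delta_k = 0$. Otherwise this is a direct extension of the classical Anderson--Morley argument for weighted graphs; because the weighted clique expansion has nonnegative entries $a_{ij}$, every triangle-inequality step transfers unchanged from the 2-graph setting to the general (non-uniform) hypergraph setting, exactly as the author remarks just before the statement.
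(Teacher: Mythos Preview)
Your argument is correct and complete: the two eigenvalue equations at $k$ and at a maximal neighbor $l$, combined by multiplication, give $|\lambda_n-\delta_k|\,|\lambda_n-\delta_l|\le\delta_k\delta_l$, and your case split cleanly extracts $\lambda_n\le\delta_k+\delta_l$. The degenerate cases ($\delta_k=0$, $|x_l|=0$) are handled properly.

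However, this is a genuinely different route from the paper. The paper does not touch the eigenvector at all: it conjugates $L$ by the diagonal matrix $\Delta=\mathrm{diag}(\delta_1,\dots,\delta_n)$ and applies Gershgorin's theorem to $\Delta^{-1}L\Delta$. The $i$-th Gershgorin radius becomes $\frac{1}{\delta_i}\sum_{j\sim i}a_{ij}\delta_j$, a weighted average of the $\delta_j$ over neighbors of $i$, hence at most $\max_{j\sim i}\delta_j$; this immediately yields $\lambda_n\le\delta_i+\max_{j\sim i}\delta_j$ for some $i$. Your Anderson--Morley style argument is more hands-on and self-contained (no need to spot the right similarity), and it avoids the implicit assumption that $\Delta$ is invertible; the paper's proof, on the other hand, is a one-line application of Gershgorin once the conjugation is written down, and the weighted-average form it produces is often sharper in practice than the bare $\delta_k+\delta_l$ one gets from a single adjacent pair.
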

\begin{proof}
Let $\Delta= \mbox{ diag}(\delta_1, \ldots, \delta_n) $. Consider $\Delta^{-1}L \Delta$.
\[
(\Delta^{-1}L \Delta)_{ij}= 
  \begin{cases} 
   \delta_i & \text{if } i=j, \\
   -a_{ij}\frac{\delta_j}{\delta_i}     & \text{otherwise. } 
  \end{cases}
\]
Applying Gerschgorin theorem, there exists an $i$ such that 
$$|\lambda_n - \delta_i| \leq \sum_{i \sim j } \left|a_{ij}\frac{\delta_j}{\delta_i}\right|= \frac{\sum_j a_{ij} \delta_j}{\sum_{j}a_{ij} } \leq \max_{i \sim j} \delta_j. $$
Hence
$$ \lambda_n \leq  \max_{i \sim j} (\delta_i + \delta_j). $$
\end{proof}

Let $m_i$ denote the average degree of neighbours of vertex $i$, \emph{i.e.} $ m_i=\frac{\sum_{i\sim j} d_j}{d_i}$. There are many upper bounds on the largest eigenvalue in terms of $m_i$. (For example, see \cite{aouchiche2010survey} and the references therein). We have the following  bound for 2-graphs by Zhu \cite{zhu2010upper} that relates the largest eigenvalue with the degree and average degree of neighbours of each vertex. 
\begin{theorem}
\label{zhu_theorem}
$$\lambda_n \leq \max_{i \sim j } \left\lbrace \frac{d_i(d_i + m_i)+d_j(d_j + m_j) - 2 \sum_{\ell \in N(i) \cap N(j)}d_{\ell} }{d_i + d_j}\right\rbrace.$$
\end{theorem}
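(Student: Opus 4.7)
The plan is to apply the Laplacian eigenvalue equation $Lx=\lambda_n x$ simultaneously at two adjacent vertices $i\sim j$, combine the two resulting scalar equations with an appropriate weighting, and invoke Cauchy--Schwarz so that the factor $d_i+d_j$ in the denominator of the stated bound arises naturally.

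First, I would rewrite the numerator using the identity $d_i m_i=\sum_{k\sim i}d_k$ in the equivalent form
\[
d_i(d_i+m_i)+d_j(d_j+m_j)-2\!\!\sum_{k\in N(i)\cap N(j)}\!\!d_k \;=\; d_i^{2}+d_j^{2}+\!\!\sum_{k\in N(i)\triangle N(j)}\!\!d_k,
\]
where $\triangle$ denotes symmetric difference. This rearrangement clarifies the structural origin of the bound: $d_i^{2}$ and $d_j^{2}$ correspond to the squared diagonal entries of $L$ at $i$ and $j$, while the symmetric-difference sum counts each common neighbor once rather than twice. The factor of $-2$ in front of $\sum_{k\in N(i)\cap N(j)}d_k$ therefore has a transparent meaning: each $\ell\in N(i)\cap N(j)$ contributes $d_\ell$ to both $d_i m_i$ and $d_j m_j$, and the subtraction compensates exactly this double count.

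Let $x$ be a unit eigenvector for $\lambda_n$. At an adjacent pair $i\sim j$, the eigenvalue equation gives
\[
(\lambda_n-d_i)x_i=-\!\!\sum_{k\sim i}x_k,\qquad (\lambda_n-d_j)x_j=-\!\!\sum_{k\sim j}x_k.
\]
I would multiply the first by $d_ix_i$, the second by $d_jx_j$, and add; the left-hand side becomes $\lambda_n(d_ix_i^{2}+d_jx_j^{2})-(d_i^{2}x_i^{2}+d_j^{2}x_j^{2})$. On the right-hand side, Cauchy--Schwarz applied to each neighborhood sum, together with a splitting of $N(i)$ and $N(j)$ into contributions from $N(i)\cap N(j)$ and $N(i)\triangle N(j)$, converts the cross-terms into an upper bound involving precisely the quantities $\sum_{k\in N(i)}d_k x_k^{2}$ and $\sum_{k\in N(j)}d_k x_k^{2}$ with the common neighbors combined. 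Choosing the edge $i\sim j$ that realizes the maximum in the stated bound, bounding $x_k^{2}$ by $\max(x_i^{2},x_j^{2})$ for the remaining neighbors, and normalizing by $d_ix_i^{2}+d_jx_j^{2}$ isolates $\lambda_n$ and yields the inequality.

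The main obstacle will be the combinatorial bookkeeping in the Cauchy--Schwarz step: the inequality must be applied in a form sharp enough to preserve the refinement, tracking each $\ell\in N(i)\cap N(j)$ with multiplicity exactly one in $d_im_i$ and once in $d_jm_j$. A careless grouping of the two neighborhood sums collapses the argument to the coarser estimate $\lambda_n\le\max_{i\sim j}(d_i+d_j)$ already given in Proposition~\ref{degi_plus_degj}. Thus the heart of the proof is not the use of Cauchy--Schwarz per se but the combinatorial reorganization of the neighborhood contributions around the edge $ij$ that survives the inequality unchanged.
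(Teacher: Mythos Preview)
Your approach has a genuine gap. The step ``choosing the edge $i\sim j$ that realizes the maximum in the stated bound, bounding $x_k^{2}$ by $\max(x_i^{2},x_j^{2})$'' is logically inconsistent: if you pick the edge that maximizes the right-hand side of the theorem, you have no information about the eigenvector components at $i$ and $j$, so there is no reason for $x_k^{2}\le\max(x_i^{2},x_j^{2})$ to hold for neighbors $k$. The standard maneuver runs the other way --- one selects $i$ (and then a neighbor $j$) based on where a suitably weighted $|x_i|$ is extremal, derives an inequality at \emph{that} particular edge, and only then bounds the resulting expression by the maximum over all edges. Your sketch inverts this logic. Moreover, the Cauchy--Schwarz step is too vague to assess: after multiplying the two eigenvalue equations by $d_ix_i$ and $d_jx_j$ and adding, the cross terms $-d_ix_i\sum_{k\sim i}x_k-d_jx_j\sum_{k\sim j}x_k$ do not obviously reorganize into $\sum_{k\in N(i)}d_kx_k^{2}$ and $\sum_{k\in N(j)}d_kx_k^{2}$ under any standard weighted form of Cauchy--Schwarz, and you have not said which weights or grouping make this work while keeping the $-2\sum_{\ell\in N(i)\cap N(j)}d_\ell$ correction intact rather than collapsing to the coarser $\max_{i\sim j}(d_i+d_j)$ bound you yourself flag as the danger.

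The paper does not prove this theorem itself --- it is quoted from Zhu --- but the route the paper takes for the non-uniform generalization (which, specialized to $\delta_i=d_i$, yields exactly the present statement) is entirely different from yours: one substitutes $f(i,j)=d_i+d_j$ into Lemma~\ref{zhu_lemma} and simplifies. The $|N(i)\cap N(j)|$ term in the lemma cancels against the common-neighbor contribution once the sums over $N(i)\setminus N(j)$ and $N(j)\setminus N(i)$ are rewritten over $N(i)$, $N(j)$, and $N(i)\cap N(j)$, and the expression collapses algebraically to the stated bound. That lemma is an edge-indexed Gershgorin-type estimate, which is why the bound is naturally parametrized by an edge $ij$ and why the common-neighbor term appears cleanly without any eigenvector bookkeeping at all.
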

\noindent Here $N(i)$ is the set of vertices adjacent to $i$. This result holds for $k$-uniform hypergraphs in the same form. For non-uniform hypergraphs there is 
an additional factor of $\left(\frac{k_{\max}-1}{k_{\min}-1} \right)$. In order to prove the result for non-uniform case 
we make use of the lemma below (Theorem 2.3 in \cite{zhu2010upper}).

\begin{lemma}
\label{zhu_lemma}
Let $G = (V,E)$ be a simple graph. Let $f : V \times V \longrightarrow \R^+  \cup \{0\}$ be a nonnegative function
which is positive on edges. Then $\lambda_n$ is less than or equal to
$$ \max_{i \sim j} \left\lbrace |N(i) \cap N(j)| + \frac{\sum_{\ell \in N(i)\backslash N(j)}f(i,\ell)+ \sum_{\ell \in N(j)\backslash N(i)} f(j,\ell)}{f(i,j)} \right\rbrace.$$
\end{lemma}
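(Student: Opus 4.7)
The plan is to mirror Zhu's original argument for the ordinary graph case: one fixes a top eigenvector and applies the eigenvalue equation simultaneously at two adjacent vertices, then combines the two scalar identities with weights dictated by $f$ and closes via Cauchy--Schwarz. Since the resulting bound is over edges $\{i,j\}$, it is natural to prove that $\lambda_n$ is bounded by the quantity inside the braces for a \emph{specific} edge, then take the maximum.

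First, let $x$ be a real unit eigenvector with $Lx=\lambda_n x$, and for each vertex $v$ write out $\lambda_n x_v = d_v x_v - \sum_{u\sim v} x_u$. Evaluating this at $v=i$ and $v=j$ and splitting the two neighbor sums into the three disjoint pieces $N(i)\cap N(j)$, $N(i)\setminus N(j)$, and $N(j)\setminus N(i)$ (noting that $j\in N(i)\setminus N(j)$ and $i\in N(j)\setminus N(i)$) separates the common-neighbor contributions, which now appear symmetrically in both identities, from the private-neighbor contributions, which appear only in one of the two.

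Second, I would form a linear combination of the two identities aimed at producing a factor of $(x_i-x_j)^2$ on the left. The common-neighbor indices $\ell\in N(i)\cap N(j)$ contribute cross terms of the form $x_\ell(x_i-x_j)$; an elementary inequality of the type $2ab \le a^2+b^2$ will turn the sum of these into an $|N(i)\cap N(j)|$ factor. The private-neighbor contributions from $N(i)\setminus N(j)$ and $N(j)\setminus N(i)$, carrying the weights $f(i,\ell)$ and $f(j,\ell)$ respectively, are handled by a weighted Cauchy--Schwarz that introduces precisely the denominator $f(i,j)$ (paired against a $(x_i-x_j)^2$ factor produced by the combination). After cancelling this common $(x_i-x_j)^2$ factor --- which is admissible whenever the bound is not already vacuous for the edge $\{i,j\}$ --- one reads off $\lambda_n \le R_{ij}$ with $R_{ij}$ the expression in the braces.

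The main obstacle will be choosing the coefficients in the linear combination so that both aggregation steps yield \emph{exactly} the stated constant $|N(i)\cap N(j)|$ rather than a weaker quantity such as $2|N(i)\cap N(j)|$; a careless choice of weights inflates the constant and loses the refinement over Proposition~\ref{degi_plus_degj}. A secondary subtlety is the degenerate case $x_i=x_j$ on every edge, which forces $x$ to be constant on connected components and hence corresponds to $\lambda_n=0$; this boundary case must be isolated so that the cancellation of $(x_i-x_j)^2$ is legitimate. Once these two points are handled, maximizing over edges $\{i,j\}$ delivers the lemma.
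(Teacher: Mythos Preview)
The paper does not prove this lemma: it is quoted as Theorem~2.3 of Zhu~\cite{zhu2010upper} and invoked as a black box in the next theorem. There is therefore no in-paper argument to compare your sketch against.

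On the sketch itself, there is one genuine gap. The function $f$ is completely extraneous to the Laplacian, so the raw eigenvalue identities $\lambda_n x_i=\sum_{\ell\sim i}(x_i-x_\ell)$ and $\lambda_n x_j=\sum_{\ell\sim j}(x_j-x_\ell)$ contain no trace of it; you never explain how the weights $f(i,\ell)$, $f(j,\ell)$ and the denominator $f(i,j)$ are to be produced. A ``weighted Cauchy--Schwarz'' cannot conjure $f$ out of sums that do not involve $f$. In Zhu's argument this is precisely the role of the edge selection you leave unspecified: one chooses the edge $\{i,j\}$ that maximises $|x_i-x_j|/f(i,j)$ over all edges, and then for every other edge $\{i,\ell\}$ the maximality gives $|x_i-x_\ell|\le \frac{f(i,\ell)}{f(i,j)}\,|x_i-x_j|$, which is how the $f$'s enter. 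With that choice in hand, subtracting the two eigenvalue identities and splitting neighbours yields
\[
\lambda_n\,|x_i-x_j|\;\le\;|N(i)\cap N(j)|\,|x_i-x_j|\;+\sum_{\ell\in N(i)\setminus N(j)}|x_i-x_\ell|\;+\sum_{\ell\in N(j)\setminus N(i)}|x_j-x_\ell|,
\]
where the common-neighbour contribution is an \emph{exact} $|N(i)\cap N(j)|\,(x_i-x_j)$ (no $2ab\le a^2+b^2$ is needed), and the private-neighbour sums are bounded via the maximality inequality above. Dividing by $|x_i-x_j|$ finishes; the degenerate case is automatically excluded because the maximising edge has $x_i\neq x_j$ whenever $\lambda_n>0$.
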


The generalization of \ref{zhu_theorem} for non-uniform hypergraphs is presented below. 

\begin{theorem}
 $$\lambda_n \leq \max_{i \sim j } \left\lbrace \left(\frac{k_{\max}-1}{k_{\min}-1}\right)\left( \frac{d_i(d_i + m_i)+d_j(d_j + m_j) - 2 \sum_{\ell \in N(i) \cap N(j)}d_\ell }{d_i + d_j}\right)\right\rbrace.$$
\end{theorem}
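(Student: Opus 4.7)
My plan is to follow Zhu's proof of Theorem~\ref{zhu_theorem} for 2-graphs closely, modifying it to account for the fact that in a non-uniform hypergraph the Laplacian degree $\delta_i$ only satisfies the sandwich $(k_{\min}-1)d_i \le \delta_i \le (k_{\max}-1)d_i$, whereas for a 2-graph one has the clean equality $\delta_i = d_i$. The extra factor $(k_{\max}-1)/(k_{\min}-1)$ in the statement is precisely the slack left by this sandwich, and in the uniform case $k_{\min}=k_{\max}=k$ it collapses to $1$, recovering Zhu's original bound as a sanity check.

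I would begin by invoking Lemma~\ref{zhu_lemma} on the underlying $1$-section graph of $H$, choosing the weight function $f(i,j) = \delta_i + \delta_j$, the sum of Laplacian degrees. Writing $\tilde m_i := (\sum_{\ell \in N(i)} \delta_\ell)/\delta_i$ for the average Laplacian-neighbor degree at $i$, and decomposing $N(i) = (N(i) \cap N(j)) \sqcup (N(i) \setminus N(j))$, the sums in the bracket on the right-hand side of Lemma~\ref{zhu_lemma} rearrange by exactly the same algebra as in Zhu's 2-graph derivation. After combining the $|N(i)\cap N(j)|$ term with the split sums and dividing through by $f(i,j) = \delta_i + \delta_j$, this yields the intermediate bound
\[
\lambda_n \;\le\; \max_{i \sim j}\, \frac{\delta_i(\delta_i + \tilde m_i) + \delta_j(\delta_j + \tilde m_j) - 2\sum_{\ell \in N(i) \cap N(j)} \delta_\ell}{\delta_i + \delta_j},
\]
which is the literal $\delta$-analogue of Zhu's expression.

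The second step is to convert this $\delta$-expression into a $d$-expression. I would upper-bound each occurrence of $\delta$ in a positive numerator contribution by $(k_{\max}-1)d$, lower-bound each occurrence of $\delta$ inside the subtracted sum $-2\sum_\ell \delta_\ell$ by $(k_{\min}-1)d$ (note the sign-flip), and lower-bound the denominator by $(k_{\min}-1)(d_i+d_j)$. Similarly $\tilde m_i$ is bounded in terms of $m_i$ by the same sandwich applied termwise in its definition. Carefully pairing these bounds so that each term picks up at most one factor $(k_{\max}-1)$ upstairs and one factor $(k_{\min}-1)$ downstairs, the whole expression collapses to $(k_{\max}-1)/(k_{\min}-1)$ times the 2-graph expression in the $d$'s and $m$'s, giving the theorem.

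The chief obstacle is precisely this bookkeeping in the $\delta \to d$ conversion. One must verify that the subtracted term $-2\sum_\ell \delta_\ell$ is bounded in the correct direction so that the whole inequality continues to point the right way, and that the accumulated multiplicative constants combine to give exactly the linear ratio $(k_{\max}-1)/(k_{\min}-1)$ rather than a quadratic blow-up such as $((k_{\max}-1)/(k_{\min}-1))^2$. This requires that, in each term of the numerator and denominator separately, no more than a single upper bound and a single lower bound be applied. The uniform case $k_{\min}=k_{\max}=k$ gives a convenient internal check that the constants have been tracked correctly.
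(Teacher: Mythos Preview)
Your overall strategy---apply Lemma~\ref{zhu_lemma} with $f(i,j)=\delta_i+\delta_j$, simplify, then convert $\delta$'s to $d$'s via the sandwich $(k_{\min}-1)d_i\le\delta_i\le(k_{\max}-1)d_i$---is exactly the paper's approach. The execution, however, has two concrete problems.

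First, your displayed intermediate ``$\delta$-analogue'' is not what the algebra from the lemma actually yields. The numerator coming out is $\sum_{\ell\in N(i)}(\delta_i+\delta_\ell)+\sum_{\ell\in N(j)}(\delta_j+\delta_\ell)-2\sum_{N(i)\cap N(j)}\delta_\ell$, which expands to $|N(i)|\,\delta_i+\delta_i\tilde m_i+\cdots$, not $\delta_i(\delta_i+\tilde m_i)=\delta_i^2+\delta_i\tilde m_i+\cdots$. The identification $|N(i)|=\delta_i$ is a simple-$2$-graph coincidence that fails for hypergraphs, and if you keep the $\delta_i^2$ form then bounding $\delta_i^2\le(k_{\max}-1)^2d_i^2$ already picks up a squared factor and destroys the constant. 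Second, your proposed treatment of the subtracted term---upper-bounding $-2\sum_\ell\delta_\ell$ via the \emph{lower} estimate $\delta_\ell\ge(k_{\min}-1)d_\ell$---cannot produce the clean factor $(k_{\max}-1)/(k_{\min}-1)$ either: the positive numerator pieces would carry $(k_{\max}-1)$ while the negative piece carries $(k_{\min}-1)$, and these do not combine into a single scalar multiple of the $d$-expression. The paper sidesteps both issues by bounding the \emph{entire} numerator by $(k_{\max}-1)$ times the $d$-numerator in one stroke, and only the denominator by $(k_{\min}-1)(d_i+d_j)$; this uniform upper bound on the numerator is legitimate because the numerator regroups as $|N(i)|\delta_i+|N(j)|\delta_j+\sum_{N(i)\setminus N(j)}\delta_\ell+\sum_{N(j)\setminus N(i)}\delta_\ell$, a sum of nonnegative terms, so no sign-flip and no term-by-term mixing of constants is needed.
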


\begin{proof}
  Substituting $f(i,j)= \delta_i + \delta_j$ in Lemma \ref{zhu_lemma}, we get the term
 \begin{align*}
  &\sum_{\ell \in N(i)\backslash N(j)} (\delta_i + \delta_{\ell} )+ \sum_{\ell \in N(j)\backslash N(i)} (\delta_j + \delta_{\ell}) \\
  &= \sum_{N(i)} (\delta_i + \delta_{\ell}) + \sum_{N(j)} (\delta_j + \delta_{\ell}) - \sum_{N(i)\cap N(j)} (\delta_i + \delta_j + 2 \delta_{\ell}).
 \end{align*}
Then 
\begin{align*}
&|N(i)\cap N(j)|+ \frac{\sum_{N(i)} (\delta_i + \delta_{\ell}) + \sum_{N(j)} (\delta_j + \delta_{\ell})}{\delta_i + \delta_j} \\
&- |N(i)\cap N(j)|\frac{(\delta_i + \delta_j)}{(\delta_i + \delta_j)} - \frac{2 \sum_{N(i)\cap N(j)} \delta_{\ell}}{\delta_i + \delta_j} \\
&= \frac{ \sum_{N(i)} (\delta_i + \delta_{\ell}) + \sum_{N(j)} (\delta_j + \delta_{\ell})- 2 \sum_{N(i)\cap N(j)} \delta_{\ell}}{\delta_i + \delta_j}.
\end{align*}

Since $\delta_i \leq (k_{\max}-1)\:d_i $, for all $i =1, \ldots, n$, for the numerator we have,
\begin{align*}
&\sum_{N(i)} (\delta_i + \delta_{\ell}) + \sum_{N(j)} (\delta_j + \delta_{\ell})- 2 \sum_{N(i)\cap N(j)} \delta_{\ell}  \\
&\leq (k_{\max}-1)(d_i(d_i + m_i)+d_j(d_j + m_j) - 2 \sum_{{\ell} \in N(i) \cap N(j)}d_{\ell}) 
\end{align*}

The denominator of the expression becomes 
$$\delta_i + \delta_j \geq (k_{\min}-1)(d_i + d_j), $$ since $ \delta_i \geq (k_{\min}-1)\:d_i$. 
Together with the above expression we get the required result.
\end{proof}

One can observe that each of the results mentioned is an improvement over the preceding bound. Not all such results, however, translate from 2-graphs to hypergraphs. For example consider the statement from Proposition \ref{degi_plus_degj}.  

For 2-graphs we have, $$ \lambda_n \leq \max_{i \sim j} (d_i + d_j).$$  However, this statement does not extend 
to hypergraphs in the form $$ \lambda_n \leq \max_{i_1\ldots i_r \in E} (d_{i_1}+ \ldots + d_{i_r}).$$ 
The counterexample is as follows.

\begin{example}
 Consider the graph $H= \{123,124,235,345 \} $. $\lambda_n =8.23 > 8 = \{d_2 + d_3 + d_5 \} $.
\end{example}

\section{Connectivity Results}
\label{sec_connresults}

We now present some connectivity results for general hypergraphs. Let $\partial S$ denote the edge boundary of $S$
which is defined as $ \partial S= E(S, V \backslash S)$. Various connectivity parameters can be defined in terms of $\partial S$. Some results have been provided for uniform hypergraphs in \cite{rodriguez2009laplacian}. We provide the same for non-uniform hypergraphs.  

First, we prove the following lemma. 

\begin{lemma}
\label{boundary_bound}
 For any $S \subset V $, 
 $$ \frac{4 \lambda_2 |S| (n-|S|)}{nk^2_{\max}} \leq |\partial S| \leq \frac{\lambda_n |S| (n-|S|)}{n(k_{\min}-1)}.$$
\end{lemma}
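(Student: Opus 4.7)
The plan is a standard Rayleigh quotient / test vector argument, with the usual care taken to relate the pairwise sum in the Laplacian quadratic form to the hyperedge boundary $|\partial S|$. This last step is where the factors $k_{\min}-1$ and $k_{\max}^2$ enter.

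First I would pick the test vector
\[
x_i = \begin{cases} n - |S| & \text{if } i \in S,\\ -|S| & \text{if } i \notin S, \end{cases}
\]
which satisfies $x \perp \mathbf{1}$, so that $\lambda_2 \le x^T L x / x^T x \le \lambda_n$ (the extremes are attained only by the constant eigenvector $\mathbf{1}$ associated with $\lambda_1 = 0$). A short computation gives $x^T x = n\,|S|(n-|S|)$. Using the standard identity $x^T L x = \sum_{i<j} a_{ij}(x_i - x_j)^2$, only pairs with exactly one endpoint in $S$ contribute, each with $(x_i - x_j)^2 = n^2$, so
\[
x^T L x \;=\; n^2 \sum_{\substack{i \in S \\ j \notin S}} a_{ij}.
\]

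The key combinatorial step is to rewrite $\sum_{i \in S, j \notin S} a_{ij}$ as a sum over boundary hyperedges. For each $e \in \partial S$ let $s_e = |e \cap S|$, so the number of ordered cross pairs inside $e$ is $s_e(|e|-s_e)$, and
\[
\sum_{\substack{i \in S \\ j \notin S}} a_{ij} \;=\; \sum_{e \in \partial S} s_e(|e|-s_e).
\]
Since $1 \le s_e \le |e|-1$, we have the two sandwiching bounds
\[
k_{\min}-1 \;\le\; s_e(|e|-s_e) \;\le\; \frac{|e|^2}{4} \;\le\; \frac{k_{\max}^2}{4},
\]
the left one coming from the fact that $s(k-s)$ on integers in $[1,k-1]$ is minimized at the endpoints. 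Summing over $e \in \partial S$ gives
\[
(k_{\min}-1)\,|\partial S| \;\le\; \sum_{e \in \partial S} s_e(|e|-s_e) \;\le\; \frac{k_{\max}^2}{4}\,|\partial S|.
\]

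Finally, I would combine the Rayleigh bounds with these estimates. For the lower bound on $|\partial S|$, use $\lambda_2 \le x^T L x / x^T x$, which reads
\[
\lambda_2 \;\le\; \frac{n \sum_{e \in \partial S} s_e(|e|-s_e)}{|S|(n-|S|)} \;\le\; \frac{n\, k_{\max}^2\,|\partial S|}{4\,|S|(n-|S|)},
\]
and rearrange. Symmetrically, for the upper bound, use $\lambda_n \ge x^T L x / x^T x$ together with $\sum_{e \in \partial S} s_e(|e|-s_e) \ge (k_{\min}-1)|\partial S|$ to deduce the other half. The only real obstacle is the combinatorial estimate on $s_e(|e|-s_e)$; once that is in hand, everything else is routine Rayleigh quotient algebra, and the proof goes through for arbitrary (non-uniform) hypergraphs exactly as in the uniform case, with $k$ replaced by $k_{\max}$ or $k_{\min}$ according to the direction of the inequality.
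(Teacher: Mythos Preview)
Your proposal is correct and follows essentially the same argument as the paper: the paper uses the indicator vector $\chi_S$ together with Fiedler's min/max characterizations of $\lambda_2$ and $\lambda_n$, which is equivalent to your centered test vector plugged into the Rayleigh quotient, and then performs the identical combinatorial estimate $k_{\min}-1 \le s_e(|e|-s_e) \le k_{\max}^2/4$ for each boundary edge. The only differences are cosmetic (your vector is the constant-shifted, scaled version of theirs), so the two proofs coincide.
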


\begin{proof} 
The bound holds for extreme cases $S =\emptyset $ and $S=V$. Let $S$ be a proper subset of $V$. Let $\chi_S$ be the indicator vector of $S$, 
$$  \chi_S =\begin{cases}  1 & \text{ if } i\in S, \\ 0  &\text{ otherwise.}
\end{cases} $$
Then $ \displaystyle\sum\limits_{i \in V} \displaystyle\sum\limits_{j \in V} (w_i-w_j)^2= 2|S|(n-|S|)$. We have the following results by 
Fielder \cite{fiedler1973algebraic}.

$$\lambda_2 = 2n \min \left\lbrace\frac{\sum_{i \sim j}a_{ij} (w_i - w_j)^2 }{\sum_{i \in V} \sum_{j \in V} (w_i - w_j)^2} : w \neq c \cdot 1_n \mbox{ for } c \in \R\right\rbrace .$$
$$\lambda_n = 2n \max \left\lbrace\frac{\sum_{i \sim j}a_{ij} (w_i - w_j)^2 }{\sum_{i \in V} \sum_{j \in V} (w_i - w_j)^2} : w \neq c \cdot 1_n \mbox{ for } c \in \R\right\rbrace . $$
Thus we have,
$$ \lambda_2 \leq \frac{n \sum_{i \sim j} a_{ij}(w_i-w_j)^2 }{|S|(n-|S|)} \leq \lambda_n.$$

In the sum $(w_i - w_j)^2$, only the boundary edges contribute to the sum. Let $e \in \partial S$ such that $|e \cap S| =k$. The edge $e$ contributes 
$k(|e|-k)$ to the sum. The maximum value of this function over all the edges in $E$ is $k_{\max}^2 /4$, and the minimum value is $(k_{\min}-1)$. Substituting we get the required inequality.
\end{proof}

The following results directly follow form Lemma \ref{boundary_bound} above.

\begin{theorem}
\label{edge_density} 
Let the edge-density of a set $S \subset V $ be defined as $\rho(S) = \frac{|\partial S|}{|S|(n-|S|)} $. Then,
$$ \frac{4 \lambda_2}{nk^2_{\max}} \leq \rho(S) \leq \frac{\lambda_n }{n(k_{\min}-1)} .$$
\end{theorem}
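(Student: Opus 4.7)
The plan is essentially immediate from Lemma \ref{boundary_bound}, since Theorem \ref{edge_density} is just a restatement in terms of the density ratio. First, I would observe that $\rho(S)$ is only well defined for proper nonempty subsets $S \subsetneq V$, i.e.\ when $0 < |S| < n$; this is the natural setting since $\partial S = \emptyset$ trivially in the degenerate cases and the denominator $|S|(n-|S|)$ vanishes.

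Next, for such $S$ I would invoke Lemma \ref{boundary_bound}, which gives the two-sided bound
$$\frac{4\lambda_2 |S|(n-|S|)}{nk_{\max}^2} \leq |\partial S| \leq \frac{\lambda_n |S|(n-|S|)}{n(k_{\min}-1)}.$$
Since $|S|(n-|S|) > 0$ under our restriction, dividing every term of this chain of inequalities by $|S|(n-|S|)$ preserves the direction of the inequalities and yields
$$\frac{4\lambda_2}{nk_{\max}^2} \leq \frac{|\partial S|}{|S|(n-|S|)} \leq \frac{\lambda_n}{n(k_{\min}-1)},$$
which is exactly $\frac{4\lambda_2}{nk_{\max}^2} \leq \rho(S) \leq \frac{\lambda_n}{n(k_{\min}-1)}$ by the definition of $\rho(S)$.

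There is no real obstacle here: the content of the theorem is entirely absorbed in Lemma \ref{boundary_bound}, whose proof used the Rayleigh quotient characterization of $\lambda_2$ and $\lambda_n$ due to Fiedler together with the bounds $(k_{\min}-1) \leq k(|e|-k) \leq k_{\max}^2/4$ on how much a boundary edge contributes to $\sum_{i\sim j} a_{ij}(w_i - w_j)^2$ when $w = \chi_S$. So the proof of Theorem \ref{edge_density} should be a single short paragraph that quotes the lemma and divides out $|S|(n-|S|)$.
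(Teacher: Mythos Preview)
Your proposal is correct and matches the paper's own treatment: the paper simply states that Theorem \ref{edge_density} follows directly from Lemma \ref{boundary_bound}, which is exactly what you do by dividing the two-sided inequality through by the positive quantity $|S|(n-|S|)$. Your added remark about restricting to proper nonempty subsets is a reasonable clarification, but otherwise there is nothing to add.
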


\begin{theorem}
Let max-cut be defined as $mc(H)= \max \{|\partial S|: S \subset V \} $. Then,
$$mc(H) \leq \frac{n \lambda_n}{4(k_{\max}-1)}. $$
\end{theorem}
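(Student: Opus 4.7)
The plan is to apply Lemma~\ref{boundary_bound} as a black box and then optimize the resulting bound over the size of $S$. The lemma already provides, for every $S \subset V$,
$$|\partial S| \;\leq\; \frac{\lambda_n\, |S|(n-|S|)}{n(k_{\min}-1)},$$
and the only quantity on the right-hand side that depends on $S$ is the factor $|S|(n-|S|)$. Everything else is a property of the hypergraph, so I would just bound that single factor uniformly and pass to the supremum over $S$.

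The key step is the elementary fact that the quadratic $t \mapsto t(n-t)$ on $[0,n]$ is maximized at $t=n/2$ with value $n^2/4$ (equivalently by AM--GM applied to $|S|$ and $n-|S|$). Substituting this maximum into the lemma's estimate yields
$$|\partial S| \;\leq\; \frac{\lambda_n}{n(k_{\min}-1)}\cdot\frac{n^2}{4} \;=\; \frac{n\lambda_n}{4(k_{\min}-1)},$$
valid for \emph{every} $S\subset V$, and hence for the $S$ achieving the maximum cut. Taking the maximum over $S$ on the left gives the asserted bound on $mc(H)$.

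There is essentially no technical obstacle; the argument is a one-line optimization on top of the lemma, and no integrality issue arises because we only need an upper bound (the real-valued maximum $n^2/4$ dominates the true integer maximum). The one thing I would flag is that the derivation naturally produces $k_{\min}-1$ in the denominator, whereas the statement as displayed has $k_{\max}-1$. Since $k_{\min}\le k_{\max}$, the $k_{\min}$ form is in fact the stronger inequality, and the two coincide in the uniform case; trying to replace $k_{\min}-1$ by $k_{\max}-1$ would require $k_e(|e|-k_e)\ge k_{\max}-1$ for every boundary edge, which fails as soon as the hypergraph has an edge of size $k_{\min}<k_{\max}$ crossing $\partial S$ with $|e\cap S|\in\{1,|e|-1\}$. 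I would therefore state and prove the result with $k_{\min}-1$, treating the $k_{\max}$ in the statement as a typo.
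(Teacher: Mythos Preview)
Your approach is exactly what the paper intends: the theorem is listed as a direct corollary of Lemma~\ref{boundary_bound}, and your optimization of $|S|(n-|S|)\le n^2/4$ is the only missing step. You are also right to flag the $k_{\min}$/$k_{\max}$ discrepancy and to conclude that the printed $k_{\max}-1$ should be $k_{\min}-1$.

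One correction, though: you have the direction of strength reversed. Since $k_{\min}-1\le k_{\max}-1$, the bound $\dfrac{n\lambda_n}{4(k_{\min}-1)}$ is the \emph{larger} right-hand side, hence the \emph{weaker} inequality; the printed $k_{\max}$ version is the \emph{stronger} claim. That stronger claim is not merely unreachable by this method---it is false. In the paper's own non-uniform example $H'=\{123,456,34,1256\}$ we have $n=6$, $\lambda_n=7$, $k_{\max}=4$, and the cut $S=\{1,2,4\}$ puts all four hyperedges on the boundary, so $mc(H')=4$; yet $\dfrac{n\lambda_n}{4(k_{\max}-1)}=\dfrac{42}{12}=3.5<4$. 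So your conclusion (treat $k_{\max}$ as a typo for $k_{\min}$) is correct, but the supporting reason is that the $k_{\max}$ form is too strong to hold, not that the $k_{\min}$ form is stronger.
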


\begin{theorem}
\label{mat_isoperi}
Let the isoperimetric number be defined as $\phi(H)= \min_{S \subset V} \{ \frac{|\partial S|}{|S|}: |S| \leq n/2 \} $. Then,
$$\phi(H) \geq \frac{2\lambda_2}{k^2_{\max}} .$$
\end{theorem}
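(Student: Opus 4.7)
The plan is to apply the lower bound from Lemma \ref{boundary_bound} essentially verbatim, since the definition of $\phi(H)$ is tailor-made for it. Fix an arbitrary nonempty $S \subset V$ with $|S| \leq n/2$ (if no such $S$ exists, the statement is vacuous). From Lemma \ref{boundary_bound},
$$|\partial S| \;\geq\; \frac{4\lambda_2\, |S|\,(n-|S|)}{n\, k_{\max}^2}.$$

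Next I would divide both sides by $|S|$, which is legitimate since $|S|\geq 1$, to obtain
$$\frac{|\partial S|}{|S|} \;\geq\; \frac{4\lambda_2\,(n-|S|)}{n\, k_{\max}^2}.$$
Then I would use the hypothesis $|S|\leq n/2$, equivalently $n-|S|\geq n/2$, to conclude
$$\frac{|\partial S|}{|S|} \;\geq\; \frac{4\lambda_2\cdot(n/2)}{n\, k_{\max}^2} \;=\; \frac{2\lambda_2}{k_{\max}^2}.$$
Since this inequality holds for every $S$ in the admissible class, taking the infimum over such $S$ gives $\phi(H) \geq 2\lambda_2/k_{\max}^2$.

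There is really no obstacle here; the theorem is a direct corollary of the preceding lemma, and the only substantive step is the observation that $n-|S|\geq n/2$ under the isoperimetric constraint. All the work was already done in establishing Lemma \ref{boundary_bound}, where the edge-size factor $k_{\max}^2/4$ arose from bounding $k(|e|-k)\leq k_{\max}^2/4$ for boundary edges.
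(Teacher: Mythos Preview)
Your proof is correct and matches the paper's approach exactly: the paper states that this theorem follows directly from Lemma~\ref{boundary_bound}, and your derivation spells out precisely that deduction, using the lower bound from the lemma together with $n-|S|\geq n/2$.
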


Note that Theorem \ref{mat_isoperi} gives us one half of the famous Cheeger inequality.

It appears that the introduction of terms $k_{\min} $  and $k_{\max}$ necessarily slackens the bounds. The equality is attained only in $k$-uniform hypergraphs. 


\begin{example}
Consider the following 3-graph $H= \{123,234,456,156 \} $ .
\begin{center}
\includegraphics[height=2.5cm]{matbound_unif.png} 
\end{center}
The spectrum of the Laplacian is $0^1, 2^1, 4^1, 6^3$. The set $A = \{1,4\}$ attains the upper bound of Lemma \ref{boundary_bound}. 
$$|\partial A|=4=  \left\lfloor  \frac{6\cdot 2\cdot 4}{2 \cdot 6 }  \right\rfloor. $$
The set $B= \{1,2,3 \}$ attains the lower bound.
$$|\partial B| =2= \left\lceil \frac{4 \cdot 2 \cdot 3 \cdot 3}{8 \cdot 6}  \right\rceil.$$

Now consider the following non-uniform graph $H' = \{123,456,34,1256 \} $.
\begin{center}
\includegraphics[height=2.5cm]{matbound_nonunif.png} 
\end{center}
The spectrum of the Laplacian is $0^1, 3^2, 6^1,7^2 $. The set $A = \{1,2,4 \}$ attains the maximum.
$$|\partial A|=4 < \left\lfloor \frac{7 \cdot 3 \cdot 3}{1 \cdot 6}  \right\rfloor. $$
Similarly, the singleton set $B = \{ 3 \} $ attains the minimum.
$$|\partial B|=2 > \left\lceil \frac{4 \cdot 3 \cdot 1 \cdot 5}{4\cdot 4\cdot 6} \right\rceil. $$
Note that if we had used $k_{\min}^2$ in the denominator instead of $k_{\max}^2$, the lemma would no longer be true.

\end{example}

\section{Concluding Remarks}
\label{sec_summary}

 In this paper we have generalized the results for $k$-graphs to non-uniform hypergraphs. We have established bounds for the eigenvalues of the Laplacian and some connectivity results. We have also shown that the bounds are no longer tight for non-uniform hypergraphs. 

It must be noted that the crucial property of uniqueness in representation does not translate from uniform to general hypergraphs. This fact, however, does not invalidate our results. Two different hypergraphs may give rise to the same Laplacian matrix, and hence the same spectrum. But the values of $|\partial S|$, $k_{\min}$ and $k_{\max}$ will be different for those hypergraphs and consequently the results proved here will still hold. Hence, the results are valid, though their application may differ depending on the kind of hypergraph in question.

General hypergraphs offer greater flexibility for many applications, but there is a dearth of literature devoted to non-uniform hypergraphs. We hope to further explore hypergraphs using different representations in the future.


\end{document}